\newtheorem{theorem}{Theorem}[section]
\newtheorem{lemma}[theorem]{Lemma}
\newtheorem{corollary}[theorem]{Corollary}
\newtheorem{proposition}[theorem]{Proposition}
\theoremstyle{definition}
\newtheorem{definition}[theorem]{Definition}
\newtheorem{remark}[theorem]{Remark}
\begin{document}

\title{Dehn surgery and Seifert surface system}

\author{Makoto Ozawa}
\address{Department of Natural Sciences, Faculty of Arts and Sciences, Komazawa University, 1-23-1 Komazawa, Setagaya-ku, Tokyo, 154-8525, Japan}
\email{w3c@komazawa-u.ac.jp}
\thanks{The first author is partially supported by Grant-in-Aid for Scientific Research (C) (No. 23540105 and No. 26400097), The Ministry of Education, Culture, Sports, Science and Technology, Japan}
\author{Koya Shimokawa}
\address{Department of Mathematics, Saitama University, Saitama 338-8570, Japan}
\email{kshimoka@rimath.saitama-u.ac.jp}
\thanks{The second author is partially supported by Grant-in-Aid for Scientific Research (C) (No. 25400080), The Ministry of Education, Culture, Sports, Science and Technology, Japan}

\subjclass[2010]{57M25}

\keywords{link, Dehn surgery, Seifert surface}

\begin{abstract}
For a compact connected 3-submanifold with connected boundary in the 3-sphere, we relate the existence of a Seifert surface system for a surface with a Dehn surgery along a null-homologous link.
As its corollary, we obtain a refinement of the Fox's re-embedding theorem.
\end{abstract}

\maketitle

\section{Introduction}

\begin{definition}
Let $M$ be a compact connected 3-manifold with connected boundary of genus $g$.
A {\em spanning surface system} $\{F_i\}$ for $M$ is a set satisfying the following.
\begin{enumerate}
\item $\{F_i\}$ is a set of disjoint orientable surfaces properly embedded in $M$.
\item $\{\partial F_i\}$ is a set of $g$ disjoint loops $C_1, \ldots, C_g$ which do not separate $\partial M$.
\end{enumerate}
A spanning surface system $\{F_i\}$ for $M$ is {\em completely disjoint} if $\{F_i\}$ is a set of $g$ disjoint orientable surfaces.
\end{definition}

\begin{remark}\label{Lei}
We remark by \cite[Corollary 1.4]{L} that if $M$ is a handlebody and $\{F_i\}$ is a completely disjoint spanning surface system for $M$, then there exists a meridian disk system $\{D_i\}$ for $M$ such that $\partial F_i=\partial D_i$ for $i=1,\ldots,g$.
\end{remark}

By a homological argument, we have the following proposition.

\begin{proposition}\label{existence}
Any compact connected 3-submanifold with connected boundary in $S^3$ admits a spanning surface system.
\end{proposition}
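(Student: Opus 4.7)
The plan is to find $g$ pairwise disjoint simple closed curves $C_1, \ldots, C_g$ on $\partial M$ that form a cut system (so they do not separate $\partial M$) and whose union bounds an orientable properly embedded surface in $M$; the $F_i$ are then taken to be the components of that surface. The entire argument is homological. Since $M \subset S^3$, $M$ is orientable. Applying Mayer--Vietoris to $S^3 = M \cup \overline{S^3 \setminus M}$ and using $H_1(S^3)=H_2(S^3)=0$ yields the isomorphism
\[
H_1(\partial M;\mathbb{Z}) \;\cong\; H_1(M;\mathbb{Z}) \oplus H_1(\overline{S^3 \setminus M};\mathbb{Z}).
\]
In particular the integral kernel $K := \ker\bigl(H_1(\partial M;\mathbb{Z}) \to H_1(M;\mathbb{Z})\bigr)$ is a direct summand of the torsion-free group $H_1(\partial M;\mathbb{Z}) \cong \mathbb{Z}^{2g}$; by half-lives-half-dies it has rank $g$, and it is isotropic for the intersection form, hence a Lagrangian direct summand.

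Next I will invoke the classical surface-topology fact that every Lagrangian direct summand of $H_1(\Sigma_g;\mathbb{Z})$ is represented by a \emph{cut system}, i.e.\ by $g$ pairwise disjoint simple closed curves whose complement in $\Sigma_g$ is a $2g$-times-punctured sphere. Choose $C_1, \ldots, C_g \subset \partial M$ representing such a cut system whose classes form a $\mathbb{Z}$-basis of $K$. Then $\{C_1,\ldots,C_g\}$ does not separate $\partial M$ and each $[C_i]=0$ in $H_1(M;\mathbb{Z})$, so in particular $\sum [C_i]=0$ there.

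Finally, because $[C_1 + \cdots + C_g]=0$ in $H_1(M;\mathbb{Z})$ and $M$ is orientable, the long exact sequence of the pair $(M,\partial M)$ gives a class in $H_2(M,\partial M;\mathbb{Z})$ that $\partial$-maps to $\sum[C_i]$, and this class is represented by a properly embedded orientable surface $F \subset M$ with $\partial F = C_1 \cup \cdots \cup C_g$ (the usual Seifert-type construction from a 2-chain, resolving double curves). Taking the connected components of $F$ as the $F_i$ produces a spanning surface system. I expect the main obstacle to be the second step, realizing the Lagrangian direct summand $K$ by a cut system on $\partial M$: this rests on the fact that the mapping class group of $\partial M$ surjects onto $\mathrm{Sp}(2g,\mathbb{Z})$ and therefore acts transitively on Lagrangian direct summands of $H_1(\partial M;\mathbb{Z})$, combined with the observation that the standard Lagrangian spanned by the $a$-curves of a symplectic basis is visibly realized by the standard cut system.
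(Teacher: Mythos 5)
Your argument is correct and is precisely the ``homological argument'' the paper invokes without writing out: use the Mayer--Vietoris splitting $H_1(\partial M)\cong H_1(M)\oplus H_1(E(M))$ together with half-lives-half-dies to identify the kernel as a Lagrangian direct summand, realize it by a cut system on $\partial M$, and cap the resulting curves with a properly embedded orientable surface via Poincar\'e--Lefschetz duality. Since the paper supplies no further detail, your write-up matches (and fills in) its intended proof; no discrepancies to report.
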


However, a compact connected 3-submanifold with connected boundary in $S^3$ does not always admit a completely disjoint spanning surface system.

\begin{remark}\label{Lambert}
We remark by \cite{Lam} that there exists a compact connected 3-submanifold with connected boundary of genus 2 in $S^3$ which does not admit a completely disjoint spanning surface system.
\end{remark}

\begin{definition}
Let $S$ be a genus $g>0$ closed surface in $S^3$, and put $S^3=V\cup_S V'$.
A {\em Seifert surface system} $(\{F_i\}, \{F_i'\})$ for $S$ is a pair of sets satisfying the following.
\begin{enumerate}
\item $\{F_i\}$ (resp. $\{F_i'\}$) is a spanning surface system for $V$ (resp. $V'$).
%\item $\{\partial F_i\}$ (resp. $\{\partial F_i'\}$) is a set of $g$ disjoint loops $C_1, \ldots, C_g$ (resp. $C_1', \ldots, C_g'$) which do not separate $S$.
\item $|C_i\cap C_j'|=\delta_{ij}$ for $i, j=1,\ldots, g$, where $\{\partial F_i\}=\{C_1, \ldots, C_g\}$ and $\{\partial F_i'\}=\{C_1', \ldots, C_g'\}$
\end{enumerate}
A Seifert surface system $(\{F_i\},\ \{F_i'\})$ for $S$ is {\em completely disjoint} if $\{F_i\}$ and $\{F_i'\}$ are completely disjoint.
\end{definition}

\begin{definition}
Let $L=L_1\cup \cdots \cup L_n$ be a link in $S^3$.
Following \cite{MOS}, we say that $L$ is a {\em reflexive link} if the 3-sphere can be obtained by a non-trivial Dehn surgery along $L$.
In particular, if the surgery slope $\gamma_i$ for $L_i$ is $1/n_i$ for some integer $n_i$ $(i=1,\ldots, n)$, then we call the Dehn surgery a {\em $1/\Bbb{Z}$-Dehn surgery}.

Suppose that $L$ is contained in a compact 3-submanifold $M$ in $S^3$.
We say that $L$ is {\em null-homologous} in $M$ if $[L]=0$ in $H_1(M;\Bbb{Z})$, and that $L$ is {\em completely null-homologous} in $M$ if $[L_i]=0$ in $H_1(M;\Bbb{Z})$ for $i=1,\ldots,n$.
\end{definition}

\begin{theorem}\label{main}
Let $M$ be a compact connected 3-submanifold with connected boundary in $S^3$.
Then the followings hold.
\begin{enumerate}
\item There exists a null-homologous link $L$ in $M$, which is reflexive in $S^3$, such that a handlebody can be obtained from $M$ by a $1/\Bbb{Z}$-Dehn surgery along $L$.
\item $M$ admits a completely disjoint spanning surface system if and only if there exists a completely null-homologous link $L$ in $M$, which is reflexive in $S^3$, such that a handlebody can be obtained from $M$ by a $1/\Bbb{Z}$-Dehn surgery along $L$.
\end{enumerate}
\end{theorem}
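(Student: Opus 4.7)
The natural strategy for both statements is to use Proposition~\ref{existence} to produce a spanning surface system $\{F_i\}$ for $M$, and then to construct a link $L$ whose $1/\Bbb{Z}$-Dehn surgery turns $\{F_i\}$ into a completely disjoint system of disks in the surgered $M$. After such a surgery $M$ becomes a handlebody, since $g$ disks with non-separating boundary cut it into a $3$-ball.

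For part~(1), the components of $L$ are built to realise two kinds of operations on each $F_i$: a compression along a non-separating simple closed curve $\alpha$ on $F_i$ (reducing $g(F_i)$ by one), and a boundary-merge along an arc on $F_i$ joining two components of $\partial F_i$ (reducing the number of boundary components). For each such $\alpha$ I would use a crossing-disk-type construction: pick a disk $D\subset S^3$ with $\partial D=\gamma\subset M$ that meets $F_i$ transversely in an arc dual to $\alpha$. Then $\pm1$-surgery on $\gamma$ is a Rolfsen twist along $D$ and hence preserves $S^3$, while performing the desired compression on $F_i$. The curve $\gamma$ is null-homologous in $M$ because it bounds the surface $D\cap M$. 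An analogous crossing-disk construction handles the boundary-merges. Placing all these components in pairwise disjoint regions of $M$, one obtains a link $L\subset M$ that is null-homologous, reflexive (it is an unlink in $S^3$ with $\pm1$-slopes), and whose $1/\Bbb{Z}$-surgery turns $M$ into a handlebody.

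For part~(2), the $(\Rightarrow)$ direction runs the same construction on a completely disjoint spanning surface system $\{F_1,\ldots,F_g\}$: since each $F_i$ is a single surface and its components of $L$ can be placed inside the disjoint product neighbourhood $F_i\times I$, each component of $L$ individually bounds a surface there, making $L$ completely null-homologous. For the $(\Leftarrow)$ direction, given completely null-homologous reflexive $L$ whose $1/\Bbb{Z}$-surgery yields a handlebody $H$, choose a meridian disk system $\{D_i\}$ for $H$ and transport it back through the surgery: each $D_i$ meets the new solid tori in meridian disks whose boundaries correspond to $(1,n_j)$-curves on $\partial N(L_j)$, and these boundaries can be capped off by individually null-homologous Seifert surfaces $S_j\subset M$ for the $L_j$. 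Small transverse adjustments then yield $g$ pairwise disjoint orientable surfaces in $M$ forming a completely disjoint spanning surface system.

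The main obstacle I anticipate is in part~(1): realising all the $\gamma$-components as a global unlink in $S^3$ (so that the simultaneous $1/\Bbb{Z}$-surgery remains reflexive) while still effecting the prescribed compressions and boundary-merges on the $F_i$'s. This amounts to choosing the crossing disks $D$ in pairwise disjoint $3$-balls, which may require a preliminary isotopy of the $F_i$'s together with a general-position argument. In the $(\Leftarrow)$ direction of part~(2), the delicate point is arranging the Seifert caps $S_j$ so that the resulting spanning surfaces are pairwise disjoint, a cut-and-paste argument using the independent null-homology of the components of $L$.
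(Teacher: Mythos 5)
Your central mechanism does not work: a $\pm1$-Dehn surgery along a curve $\gamma$ bounding a disk $D$ is a twist along $D$, and a twist only changes the \emph{embedding} of whatever passes through $D$; it never changes the intrinsic topology of a surface in the complement of $\gamma$. If $\gamma$ is disjoint from $F_i$, the surgered $F_i$ is still a properly embedded surface of the same genus with the same number of boundary components, so no choice of crossing circles can ``compress $F_i$ along $\alpha$'' or ``merge boundary components''; and if $\gamma$ meets $F_i$, then $F_i$ does not survive into the surgered manifold as a properly embedded surface at all (one must cap it off along the core of the surgery torus, as in Lemma \ref{core}, which again does not lower genus). Consequently the plan of turning a spanning surface system into a disk system by $1/\Bbb{Z}$-surgery cannot be carried out, and both part (1) and the forward direction of part (2) collapse. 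What such surgeries \emph{can} do is change the embedding of the complementary object: the paper fixes a handcuff-graph-shaped spine of the complementary handlebody together with a Seifert surface system for it, and uses band-crossing changes and full twists (realized by $1/\Bbb{Z}$-surgeries on trivial circles disjoint from the surfaces, hence null-homologous) to \emph{unknot the spine}, so that the exterior becomes a handlebody (Lemmas \ref{unknotted}, \ref{extension}, \ref{handlebody}); the general case of (1) is then reduced to this handlebody case by induction on $g(\partial M)$, using a compressing disk for $\partial M$ and a maximal compression body in $E(M)$.

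You are also missing the hard ingredient in the forward direction of (2): starting from a completely disjoint spanning surface system $\{F_i\}$ for $M$, one needs a \emph{dual} completely disjoint system on the other side of $\partial M$ before Lemma \ref{handlebody}(2) can be applied. The paper obtains this by first making $E(M)$ a handlebody $V'$ via part (1), then using a boundary-preserving degree-one map $M\to V$ to identify $M\cup V'$ with a genus-$g$ Heegaard splitting of $S^3$ (a step that invokes the Poincar\'e conjecture), and finally applying Lemma \ref{heegaard} to adjust $V'$ by a further $1/\Bbb{Z}$-surgery so that its meridian disks are geometrically dual to the curves $\partial F_i$. Your backward direction of (2) --- transporting a meridian disk system through the surgery and capping with Seifert surfaces of the components of $L$ --- is essentially the paper's tubing argument via Lemma \ref{core} and is fine in outline, but it is the only part of the proposal that survives.
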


We remark that in Theorem \ref{main} (2), we can take a completely null-homologous reflexive link $L$ so that it is disjoint from the completely disjoint spanning surface system.

\begin{remark}\label{equivalent}
Let $M$ be a compact connected 3-submanifold of $S^3$ with connected boundary of genus $g$.
Let $f:M\to X$ be a map onto a genus $g$ handlebody $X$.
We say that $f$ is a {\em boundary preserving map} of $M$ onto $X$ if $f$ is continuous and $f|_{\partial M}$ is a homeomorphism onto $\partial X$.
We say that $M$ is {\em retractable} if $M$ can be retracted onto a wedge of $g$ simple closed curves.
If such a wedge can be chosen to be in $\partial M$, then $M$ is called {\em boundary retractable}.
Set $G=\pi_1(M)$ and define $G_1=[G,G]$, $G_{n+1}=[G_n,G]$, $G_{\omega}=\bigcap_n G_n$.
Then the following conditions are equivalent.
\begin{enumerate}
\item $M$ admits a completely disjoint spanning surface system.
\item There exists a boundary preserving map from $M$ onto a handlebody.
\item $M$ is boundary retractable.
\item The natural map $\pi_1(\partial M)\to G/G_{\omega}$ is an epimorphism.
\end{enumerate}

The equivalence between (1) and (2) was shown in \cite[Theorem 2]{Lam}.
The equivalence between (2) and (3) was shown in \cite[Theorem 3]{JM}.
The equivalence between (3) and (4) was shown in \cite[Theorem 2, 3]{J}.
\end{remark}

Let $M$ be a compact connected 3-submanifold of $S^3$.
By Proposition \ref{existence}, each component of the exterior of $M$ admits a spanning surface system.
If we adapt Theorem \ref{main} (1) to the exterior of $M$, then we obtain a refinement of the Fox's re-embedding theorem as the following corollary.

\begin{corollary}[\cite{F,ST,N}]\label{Fox}
Every compact connected 3-submanifold $M$ of $S^3$ can be re-embedded in $S^3$ so that the exterior of the image of $M$ is a union of handlebodies.
\end{corollary}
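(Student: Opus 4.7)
The plan is to apply Theorem \ref{main}(1) component-by-component to the exterior of $M$ and then use reflexivity to convert the resulting simultaneous Dehn surgery into a re-embedding of $M$ in $S^3$. To prepare for this, I would first decompose the exterior $E=\overline{S^3\setminus M}$ into its connected components $N_1,\ldots,N_k$ and verify that each $N_j$ is a compact connected 3-submanifold of $S^3$ with connected boundary, so that Theorem \ref{main}(1) applies. The connected-boundary claim follows from a short Alexander duality computation: writing $\partial M=\Sigma_1\sqcup\cdots\sqcup\Sigma_m$, one has $\tilde H_0(S^3\setminus\partial M)\cong\tilde H^2(\partial M)\cong\mathbb{Z}^m$, so $S^3\setminus\partial M$ has $m+1$ components; one of them contains $\mathrm{int}(M)$, and by counting the remaining $m$ components are the exterior components, forcing each $N_j$ to be bounded by exactly one $\Sigma_i$.

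With the hypotheses in hand, the next step is to apply Theorem \ref{main}(1) to each $N_j$, obtaining a null-homologous reflexive link $L_j\subset N_j$ together with a $1/\mathbb{Z}$-Dehn surgery along $L_j$ that both returns $S^3$ and converts $N_j$ into a handlebody $H_j$. Since the $N_j$ are pairwise disjoint and $L_j\subset N_j$, the $k$ surgeries may be performed sequentially with no interference: each acts inside its own $N_j$, leaving $M$ and the other $N_i$ untouched, and by reflexivity the ambient manifold is $S^3$ after every step. After all $k$ surgeries the resulting manifold is $S^3$, it contains the original $M$ unchanged, and its exterior of $M$ is $H_1\sqcup\cdots\sqcup H_k$; identifying this $S^3$ with the original via any homeomorphism yields the required re-embedding.

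The main point to justify is that these sequential surgeries really do compose to a single $1/\mathbb{Z}$-Dehn surgery on $L=L_1\cup\cdots\cup L_k$ that preserves $S^3$ globally, rather than merely inside each local copy. This reduces to the observation that each surgery is localized to a regular neighborhood $N(L_j)\subset N_j$ and can therefore be regarded as taking place in the original $S^3$ without disturbing the regions where the later surgeries will occur, so the reflexivity of each $L_j$ applies independently. Everything else is bookkeeping: confirming the Alexander-duality count above and noting that $M\cup H_1\cup\cdots\cup H_k$ in the surgered $S^3$ is exactly the re-embedded configuration demanded by the corollary.
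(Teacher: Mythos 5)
Your overall route is the same as the paper's: Corollary \ref{Fox} is obtained by applying Theorem \ref{main}(1) to each component of the exterior of $M$ and invoking reflexivity to see that the combined surgery returns $S^3$ while turning every exterior component into a handlebody. The Alexander duality count showing that each exterior component has connected boundary (so that Theorem \ref{main}(1) actually applies to it) is correct and is a detail the paper leaves implicit.

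The one step that does not hold up as justified is the claim that, because each surgery is supported in $N(L_j)\subset N_j$ and the $N_j$ are pairwise disjoint, ``the reflexivity of each $L_j$ applies independently.'' Reflexivity is a global property of a link in $S^3$, not of its tubular neighborhood, and disjointness of the supports is not sufficient: for the Whitehead link each component is an unknot (hence reflexive for every slope $1/n$) and the linking number is zero, yet $(+1,+1)$-surgery amounts to $+1$-surgery on a nontrivial twist knot, which is not $S^3$ since a nontrivial surgery on a nontrivial knot never yields $S^3$. Thus ``individually reflexive, disjoint, and unlinked'' does not imply ``jointly reflexive,'' and your locality argument, if valid, would prove this false general statement. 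What actually saves the corollary is the specific form of the links produced in the proofs of Lemma \ref{unknotted} and Lemma \ref{handlebody}: they are trivial links whose components bound pairwise disjoint disks realizing band-crossing changes and full twists, and a $1/\Bbb{Z}$-Dehn surgery along such a link always returns $S^3$ no matter how many components are surgered simultaneously. To close the gap you should either strengthen the statement you cite so that Theorem \ref{main}(1) is recorded as producing links of this form, or note explicitly that its proof does so, whence $L_1\cup\cdots\cup L_k$ is again such a link and the combined surgery is reflexive. (The paper itself is silent on this point, so you are in good company, but the locality argument as written is not a proof.)
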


\begin{remark}
In relation with Remark \ref{equivalent}, there is an another equivalence condition.
Let $M$ be a compact connected 3-submanifold of $S^3$ with connected boundary of genus $2$.
By Corollary \ref{Fox}, there exists a re-embedding of $M$ so that its exterior is a genus $2$ handlebody $V$.
A handcuff graph shaped spine $\Gamma$ of $V$ is a {\em boundary spine} if its constituent link $L_{\Gamma}$ is a boundary link that admits a pair of disjoint Seifert surfaces whose interiors are contained in $S^3-\Gamma$.
A handlebody $V$ is {\em $(3)_S$-knotted} if it does not admit any boundary spine.
Then it was shown in \cite[Theorem 3.10]{BF} that $M$ admits a completely disjoint spanning surface system if and only if $H$ is not $(3)_S$-knotted.
\end{remark}

Theorem \ref{main} also deduces the following corollary.
It follows from (2) of the next corollary that every closed surfaces with completely disjoint Seifert surface systems can be related by $1/\Bbb{Z}$-Dehn surgeries along completely disjoint null-homologous reflexive links. 

\begin{corollary}\label{closed_surface}
Let $S$ be a closed surface in $S^3$ which separates $S^3$ into 3-submanifolds $M$ and $M'$
Then the followings hold.
\begin{enumerate}
\item There exist null-homologous links $L$ and $L'$ in $M$ and $M'$, which are reflexive in $S^3$, such that handlebodies can be obtained from $M$ and $M'$ by $1/\Bbb{Z}$-Dehn surgeries along $L$ and $L'$.
\item $S$ admits a completely disjoint Seifert surface system if and only if there exist completely null-homologous links $L$ and $L'$ in $M$, which are reflexive in $S^3$, and $M'$ such that handlebodies can be obtained from $M$ and $M'$ by $1/\Bbb{Z}$-Dehn surgeries along $L$ and $L'$.
\end{enumerate}
\end{corollary}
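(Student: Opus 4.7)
The plan is to apply Theorem~\ref{main} to $M$ and $M'$ separately and then to reconcile the resulting spanning surface systems.

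For (1), Theorem~\ref{main}~(1) applied on each side yields null-homologous reflexive links $L\subset M$ and $L'\subset M'$ whose $1/\Bbb{Z}$-surgeries convert $M$ and $M'$ into handlebodies. Disjointness of $L$ and $L'$ is automatic. The forward direction of (2) is equally direct: the two sets comprising a completely disjoint Seifert surface system for $S$ are by definition completely disjoint spanning surface systems for $M$ and $M'$ respectively, so Theorem~\ref{main}~(2) on each side supplies the required completely null-homologous reflexive links.

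The converse direction of (2) is the substantive case. Theorem~\ref{main}~(2) produces completely disjoint spanning surface systems $\{F_i\}\subset M$ and $\{F_i'\}\subset M'$ with boundary cut systems $\{C_i\}$ and $\{C_i'\}$ on $S$, but these need not satisfy $|C_i\cap C_j'|=\delta_{ij}$. Setting $A_M=\ker(H_1(S;\Bbb{Z})\to H_1(M;\Bbb{Z}))$ and $A_{M'}$ analogously, the Mayer--Vietoris sequence for $S^3=M\cup_S M'$ shows that $H_1(S;\Bbb{Z})\to H_1(M;\Bbb{Z})\oplus H_1(M';\Bbb{Z})$ is an isomorphism, so $H_1(S;\Bbb{Z})=A_M\oplus A_{M'}$ as a direct sum of $\Bbb{Z}$-modules. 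Since $A_M$ and $A_{M'}$ are Lagrangian with respect to the intersection form on $H_1(S;\Bbb{Z})$ (by the half-lives-half-dies principle applied to each side), the restriction of this form to $A_M\times A_{M'}$ is unimodular. In particular, the basis $\{[C_i]\}$ of $A_M$ admits a unique integral dual basis $\{d_j\}$ of $A_{M'}$ with $[C_i]\cdot d_j=\delta_{ij}$.

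The main obstacle, and the last step, is to realize $\{d_j\}$ geometrically as the boundary cut system of a completely disjoint spanning surface system $\{\tilde F_j'\}\subset M'$. I would decompose the change of basis from $\{[C_j']\}$ to $\{d_j\}$ into elementary matrices in $GL(g,\Bbb{Z})$ and execute each elementary move on $\{F_j'\}$ by a handle slide: band-sum $F_j'$ with a parallel copy of $F_k'$ along an arc chosen in the planar complement $\partial M'\setminus\bigcup_\ell C_\ell'$, keeping the band disjoint from the remaining surfaces. Such a move preserves the completely disjoint property while altering the boundary cut system by the corresponding elementary operation in $GL(g,\Bbb{Z})$; orientation reversal of a single surface accounts for sign changes on the diagonal. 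Iterating finitely many slides produces $\{\tilde F_j'\}$ whose boundary cut system is dual to $\{C_i\}$, so that $(\{F_i\},\{\tilde F_j'\})$ is the desired completely disjoint Seifert surface system for $S$.
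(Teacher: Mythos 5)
Your reduction of (1) and of the forward implication in (2) to Theorem \ref{main} applied separately to $M$ and $M'$ is the intended deduction, and you are right that the substantive point in the converse of (2) is the duality condition $|C_i\cap C_j'|=\delta_{ij}$. The homological bookkeeping is also correct: $H_1(S;\Bbb{Z})=A_M\oplus A_{M'}$, both summands are Lagrangian, the pairing between them is unimodular, and the boundary curves of a completely disjoint spanning surface system form a basis of the corresponding summand.

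The gap is in your last step. Band-summing $F_j'$ with parallel copies of $F_k'$ only controls the \emph{algebraic} intersection numbers $[C_i]\cdot[\tilde C_j']$, whereas the definition of a Seifert surface system requires the \emph{geometric} count $|C_i\cap \tilde C_j'|=\delta_{ij}$. Your band arcs live in $S\setminus\bigcup_\ell C_\ell'$ and will in general cross the curves $C_i$ many times, as do the parallel copies of $C_k'$; the resulting excess intersections with $\{C_i\}$ cannot in general be removed by isotopy, since on a closed surface of genus at least $2$ two simple closed curves can have algebraic intersection number $0$ or $1$ while every pair of representatives of their isotopy classes meets in strictly more points. Geometric duality of the two curve systems is in fact equivalent to saying that the abstract Heegaard gluing $H_c\cup_S H_{c'}$ of the handlebodies determined by the two systems is the standard genus $g$ splitting of $S^3$; this is a $3$-dimensional condition invisible to the intersection form, and it is exactly what the paper's machinery is there to supply. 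Concretely, the boundary-preserving degree-one maps of Remark \ref{equivalent} (Step 2 of the proof of Theorem \ref{main}(2)) show that $H_c\cup_S H_{c'}$ is simply connected, hence is $S^3$ by the Poincar\'{e} conjecture, and then Lemma \ref{heegaard} (equivalently, unknotting the handcuff spine, or Waldhausen's theorem) produces geometrically dual meridian disk systems; pulling their boundaries back along the two degree-one maps yields completely disjoint spanning surface systems in $M$ and $M'$ with geometrically dual boundaries. Without this input the dual-basis computation does not close the argument.
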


By Corollary \ref{closed_surface} (1), we can obtain a Seifert surface system from a meridian-longitude disk system for the handlebodies by tubing along the null-homologous links.

\begin{corollary}\label{existence2}
Any closed surface in $S^3$ admits a Seifert surface system.
\end{corollary}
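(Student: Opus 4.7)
The plan is to apply Corollary~\ref{closed_surface}(1) to both sides of $S$, choose a dual pair of meridian disk systems in the resulting Heegaard splitting of $S^3$, and transport these back through the Dehn surgeries to the original submanifolds.

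Let $S$ separate $S^3$ into submanifolds $M$ and $M'$. By Corollary~\ref{closed_surface}(1) there exist null-homologous reflexive links $L\subset M$ and $L'\subset M'$ and $1/\Bbb{Z}$-Dehn surgeries on $L$ and $L'$ that turn $M$ into a handlebody $H$ and $M'$ into a handlebody $H'$. Since $L\cup L'$ is reflexive the ambient manifold remains $S^3$, so $S^3=H\cup_S H'$ is a genus-$g$ Heegaard splitting.

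Next I would choose meridian disk systems $\{D_i\}\subset H$ and $\{D_j'\}\subset H'$ with $|\partial D_i\cap \partial D_j'|=\delta_{ij}$ on $S$. Such dual cut systems exist: for instance, by Waldhausen's theorem every Heegaard splitting of $S^3$ is isotopic to the standard one, in which a dual pair of meridian disk systems is explicit. Alternatively, Mayer--Vietoris applied to $S^3=H\cup_S H'$ shows that $\ker(H_1(S)\to H_1(H))$ and $\ker(H_1(S)\to H_1(H'))$ are complementary Lagrangians in $H_1(S;\Bbb{Z})$, so one may select a dual symplectic basis, realize it by disjoint simple closed curves on $S$, and adjust by handle slides in the two handlebodies until those curves bound meridian disks.

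Finally I would transport each $D_i$ back through the surgery to a surface $F_i\subset M$ with $\partial F_i=\partial D_i$, and likewise $F_j'\subset M'$. Following the hint, this is done explicitly by making $D_i$ transverse to the new surgery cores $L^*\subset H$ and replacing each intersection with a tube running along a Seifert surface for $L$ in $M$, which exists because $L$ is null-homologous. (Equivalently, $1/\Bbb{Z}$-Dehn surgery along a null-homologous link preserves $H_1$, so each $\partial D_i$ is null-homologous in $M$ and bounds an orientable surface there; if pointwise disjointness of the $F_i$ is troublesome, one may take a single connected $F$ with $\partial F=\bigcup_i\partial D_i$, which still meets the spanning surface system definition.) The boundaries on $S$ are unchanged by this procedure, so $|\partial F_i\cap\partial F_j'|=\delta_{ij}$, and $(\{F_i\},\{F_j'\})$ is a Seifert surface system for $S$.

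The hard part is the second step. The algebraic content is immediate from Mayer--Vietoris, but upgrading an algebraic dual basis to a genuinely geometric dual intersection pattern on $S$ realized by embedded meridian disks in $H$ and $H'$ requires either invoking Waldhausen's uniqueness of Heegaard splittings of $S^3$ or carrying out an explicit sequence of handle slides in both handlebodies combined with bigon-removing isotopies on the Heegaard surface.
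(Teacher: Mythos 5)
Your argument is correct and is essentially the paper's own proof: the paper likewise invokes Corollary~\ref{closed_surface}~(1), takes a dual (``meridian--longitude'') pair of meridian disk systems for the resulting Heegaard splitting of $S^3$, and transports the disks back to $M$ and $M'$ by tubing along the null-homologous surgery cores (Lemma~\ref{core}). The details you supply --- Waldhausen's theorem to obtain the dual disk systems, and the observation that $1/\Bbb{Z}$-surgery on a null-homologous link preserves $H_1$ so that each $\partial D_i$ still bounds in $M$ --- are precisely the steps the paper leaves implicit in its one-sentence justification.
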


Let $M$ be a 3-manifold.
Let $L\subset M$ be a submanifold with or without boundary.
When $L$ is 1 or 2-dimensional, we write $E(L)=M-int\,N(L)$.
When $L$ is of 3-dimension, we write $E(L)=M-int\,N(L)$.

\section{Proof}

Let $V$ be a genus $g$ handlebody in $S^3$, and $\{D_i\}$ be a meridian disk system for $V$.
Since $V-\bigcup_i int N(D_i)$ is a 3-ball, there exists a spine $\Gamma$ of $V$ such that:
\begin{enumerate}
\item $\Gamma$ consists of $g$ loops $l_1,\ldots,l_g$ and $g$ arcs $\gamma_1,\ldots,\gamma_g$ connecting $l_i$ to a point $x$.
\item The point $x$ is the center of the 3-ball $V-\bigcup_i int N(D_i)$, and which is homeomorphic to $N(x\cup \gamma_1\cup\cdots\cup\gamma_g)$.
\item Each loop $l_i$ is dual to $D_i$.
\end{enumerate}
We call this spine $\Gamma$ a {\em $g$-handcuff graph shaped spine} for $V$ with respect to $\{D_i\}$.

\begin{figure}[htbp]
	\begin{center}
	\includegraphics[trim=0mm 0mm 0mm 0mm, width=.6\linewidth]{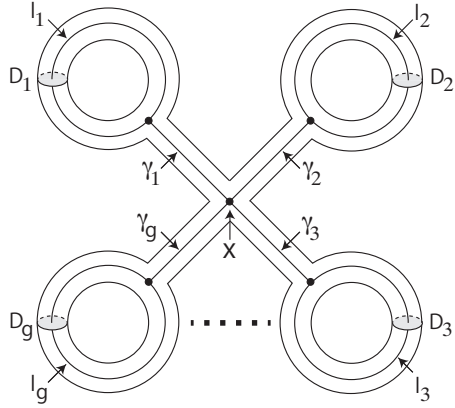}\\
	
	\end{center}
	\caption{A $g$-handcuff graph shaped spine for $V$ with respect to $\{D_i\}$}
	\label{spine}
\end{figure}

Next, let $\{F_i\}$ be a set of orientable surfaces with boundary and without closed component.
We say that $\{F_i\}$ is a {\em Seifert surface system} for $\Gamma$ if $(\bigcup_i F_i) \cap \Gamma=\bigcup_i \partial F_i=\bigcup_i l_i$.

\begin{lemma}\label{handcuff_bound}
Any $g$-handcuff graph shaped spine in $S^3$ admits a Seifert surface system.
\end{lemma}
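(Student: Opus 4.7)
The plan is to produce a single orientable surface $F\subset S^3$ with $\partial F=l_1\cup\cdots\cup l_g$ that meets $\Gamma$ only along these boundary loops; then $\{F\}$ (or its set of connected components) is a Seifert surface system for $\Gamma$ by definition.

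The construction exploits the tree $T:=\{x\}\cup\gamma_1\cup\cdots\cup\gamma_g$, which is contractible. A small closed regular neighborhood $N(T)$ is a $3$-ball, and $B:=S^3\setminus\mathrm{int}\,N(T)$ is again a $3$-ball. Taking $N(T)$ thin enough, each loop $l_i$ meets $N(T)$ in a short sub-arc through the leaf $q_i:=\gamma_i\cap l_i$, while $l_i\cap B$ is an arc with both endpoints on $\partial B$. Near each $q_i$ we insert a small half-disk $\delta_i\subset N(T)$ bounded by $l_i\cap N(T)$ together with a short arc $\alpha_i\subset\partial N(T)$, placed on the side of $l_i$ pointing away from $\gamma_i$ so that $\delta_i\cap T=\{q_i\}$. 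Closing up the outside arcs $l_i\cap B$ with the matching $\alpha_i$ produces loops $C_i\subset B$, and since $H_1(B)=0$ the link $\bigsqcup_i C_i$ bounds an orientable surface $F'\subset B$ (for example by applying Seifert's algorithm inside $B$, discarding any closed components). We then set $F:=F'\cup\bigcup_i\delta_i$, with the pieces oriented compatibly along the $\alpha_i$; this $F$ is an embedded orientable surface satisfying $\partial F=\bigcup_i l_i$ and $F\cap\Gamma=\bigcup_i l_i$, as required.

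The only real obstacle is geometric care in the local step: each half-disk $\delta_i$ must be put on the correct side of $l_i$ to avoid the interior of $\gamma_i$, and the pieces must glue to form an embedded, oriented surface. Both issues are routine general-position matters resolved in a small ball around each leaf. We emphasise that the argument uses only that $T$ is contractible and that $H_1$ of a $3$-ball vanishes, so it is insensitive to how knotted or linked the loops $l_i$ themselves may be in $S^3$.
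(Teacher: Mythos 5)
Your proof is correct, but it takes a genuinely different route from the paper's. The paper's argument is purely diagrammatic: it chooses a regular diagram of $\Gamma$ in which the tree $x\cup\gamma_1\cup\cdots\cup\gamma_g$ has no crossings and then applies Seifert's algorithm to the loops $l_1\cup\cdots\cup l_g$, the point being that the resulting surface, lying essentially on one side of the projection plane, misses the tree. You instead decompose $S^3$ along $\partial N(T)$ into two balls, build the surface by hand near the trivalent vertices (the half-disks $\delta_i$), and invoke only the vanishing of $H_1$ of the complementary ball $B$ to span the closed-up curves $C_i$. What your version buys is that the one delicate point of the paper's proof --- checking that the Seifert-algorithm surface really is disjoint from the interiors of the arcs $\gamma_i$ --- is replaced by an automatic fact: the arcs $l_i\cap B$ are contained in $\partial F'$, and $\Gamma\cap B$ consists exactly of those arcs, so disjointness of $\mathrm{int}\,F'$ from $\Gamma$ costs nothing; all remaining geometric care is confined to small balls around the $q_i$. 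The one loose phrase is the parenthetical suggestion to run ``Seifert's algorithm inside $B$'': Seifert's algorithm does not a priori keep the surface inside a prescribed ball, so you should instead justify the existence of an embedded orientable spanning surface for the null-homologous link $\bigsqcup_i C_i$ in $B$ by the standard obstruction-theoretic argument (a map of the link exterior to $S^1$ realizing the relevant cohomology class, taking the preimage of a regular value), after pushing the arcs $\alpha_i$ slightly off $\partial B$ and reattaching collars. With that substitution, and discarding closed components so that the definition's requirement is met, your argument is complete.
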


\begin{proof}
We take a regular diagram of $\Gamma$ such that $x\cup \gamma_1\cup\cdots\cup\gamma_g$ has no crossing.
Then we apply the Seifert's algorithm (\cite{S}) to loops $l_1\cup\cdots\cup l_g$ with arbitrary orientations, and obtain a Seifert surfaces $\{F_i'\}$ for the loops.
\end{proof}

The following lemma states that from any meridian disk system for a handlebody, we can obtain a Seifert surface system for the boundary of the bandlebody.

\begin{lemma}\label{extension}
Let $V$ be a genus $g$ handlebody in $S^3$ with a meridian disk system $\{D_i\}$.
Then there exists a spanning surface system $\{F_i\}$ for $E(V)$ such that $(\{D_i\},\{F_i\})$ is a Seifert surface system for $\partial V$.
\end{lemma}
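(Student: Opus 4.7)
The plan is to construct the desired spanning surface system $\{F_i\}$ for $E(V)$ by applying Lemma \ref{handcuff_bound} to a handcuff graph spine of $V$ and then restricting the resulting Seifert surfaces to the exterior.

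Specifically, I would first fix a $g$-handcuff graph shaped spine $\Gamma$ of $V$ with respect to $\{D_i\}$, as described just before the statement of the lemma, so that the loops $l_1,\ldots,l_g$ of $\Gamma$ satisfy $|l_i\cap D_j|=\delta_{ij}$. By Lemma \ref{handcuff_bound}, $\Gamma$ admits a Seifert surface system $\{F_i'\}$ in $S^3$, so $\partial F_i'=l_i$ and $(\bigcup_i F_i')\cap\Gamma=\bigcup_i l_i$; in particular $\bigcup_i F_i'$ is disjoint from the tree part $\{x\}\cup\gamma_1\cup\cdots\cup\gamma_g$ of $\Gamma$. Identifying $V$ with a sufficiently thin regular neighborhood $N(\Gamma)$---so that $E(V)=E(\Gamma)$, and, using the Seifert surface system property, $F_i'\cap V$ consists only of a tubular collar of $l_i$ in $F_i'$---I set $F_i:=F_i'\cap E(V)$. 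The boundary $\partial F_i$ is then the longitudinal push-off $\tilde{l}_i$ of $l_i$ onto $\partial V$, obtained by tracking the collar out to $\partial V$.

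The required properties of $\{F_i\}$ follow from the duality built into the handcuff spine. Since $l_i$ meets $D_i$ in a single point and misses $D_j$ for $j\neq i$, the push-off $\tilde{l}_i$ crosses $\partial D_i$ exactly once and is disjoint from $\partial D_j$ for $j\neq i$, giving the geometric intersection $|\tilde{l}_i\cap\partial D_j|=\delta_{ij}$. Moreover the $\tilde{l}_i$ sit in pairwise disjoint ``handle-region'' portions of $\partial V$ (around the distinct arms of $\Gamma$) and each is non-separating on $\partial V$, so $\{F_i\}$ satisfies the conditions of a spanning surface system for $E(V)$ and $(\{D_i\},\{F_i\})$ satisfies the conditions of a Seifert surface system for $\partial V$.

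The main technical point is pairwise disjointness of the $F_i$. If the Seifert surface system of Lemma \ref{handcuff_bound} is realized as a single embedded Seifert surface $F'$ for the link $\bigcup_i l_i$ (applying Seifert's algorithm to the entire link diagram, as in that proof) and the $F_i$ are taken to be the connected components of $F'\cap E(V)$, then disjointness is immediate. Otherwise---if one allows $g$ separately constructed Seifert surfaces $F_i'$ that may meet each other in $S^3$---the intersections $F_i\cap F_j$ in $\mathrm{int}\,E(V)$ are a disjoint union of closed curves (since the boundaries $\tilde{l}_i$ on $\partial V$ are already disjoint), and these are removed by a standard innermost-circle cut-and-paste argument, preserving the boundary curves.
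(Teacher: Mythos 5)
Your argument is exactly the paper's: take the $g$-handcuff graph shaped spine $\Gamma$ for $V$ dual to $\{D_i\}$, apply Lemma \ref{handcuff_bound} to get a Seifert surface system for $\Gamma$, and restrict it to $E(V)$. The extra verifications you supply (the intersection numbers $|\tilde{l}_i\cap\partial D_j|=\delta_{ij}$ and the disjointness coming from applying Seifert's algorithm to the whole link diagram at once) are details the paper leaves implicit, so this is the same proof, just more carefully spelled out.
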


\begin{proof}
Let $\Gamma$ be a $g$-handcuff graph shaped spine $\Gamma$ for $V$ with respect to $\{D_i\}$.
By Lemma \ref{handcuff_bound}, $\Gamma$ admits a Seifert surface system $\{F_i'\}$.
The restriction $\{F_i'\}$ to $E(V)$ gives a spanning surface system, say $\{F_i\}$, for $E(V)$ such that $(\{D_i\},\{F_i\})$ is a Seifert surface system for $\partial V$.
\end{proof}

Let $\Gamma$ be a $g$-handcuff graph shaped spine with a Seifert surface system $\{F_i\}$.
We call the operation of (1) in Figure \ref{crossing} a {\em band-crossing change} of $\{F_i\}$, and the operation of (2) in Figure \ref{crossing} a {\em full-twist} of $\{F_i\}$.
We remark that these operations can be obtained by a $1/\Bbb{Z}$-Dehn surgery along trivial links in the complement of $\{F_i\}$.

\begin{figure}[htbp]
	\begin{center}
	\begin{tabular}{cc}
	\includegraphics[trim=0mm 0mm 0mm 0mm, width=.5\linewidth]{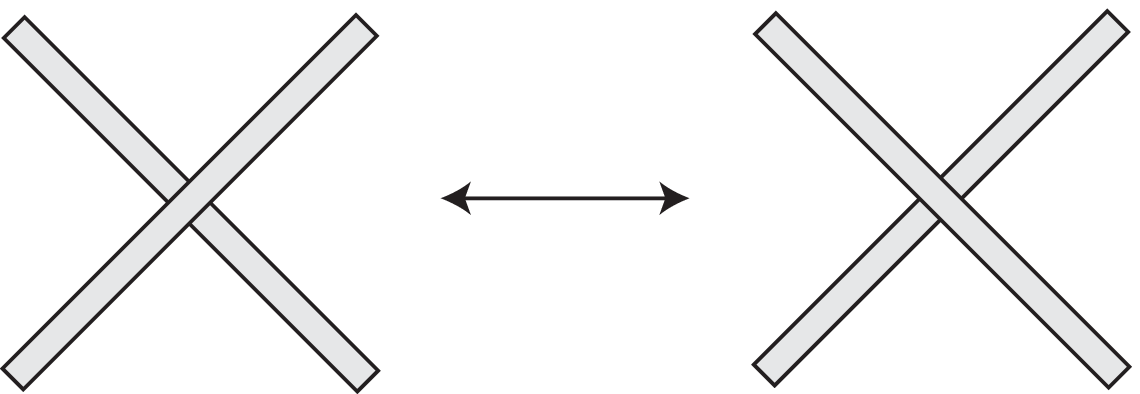}&
	\includegraphics[trim=0mm 0mm 0mm 0mm, width=.15\linewidth]{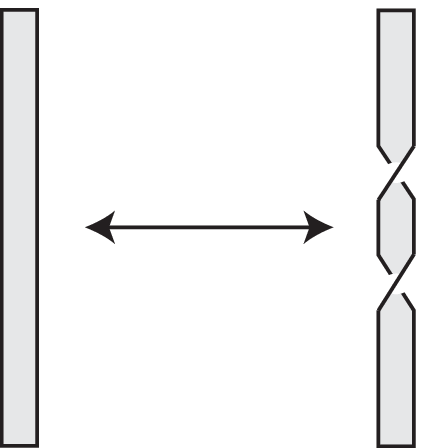}\\
	(1) a band-crossing change & (2) a full-twist\\
	\end{tabular}
	\end{center}
	\caption{A band-crossing change and a full-twist of $\{F_i\}$}
	\label{crossing}
\end{figure}

\begin{lemma}\label{unknotted}
Any $g$-handcuff graph shaped spine $\Gamma$ with a Seifert surface system $\{F_i\}$ can be unknotted by band-crossing changes and full-twists of $\{F_i\}$.
\end{lemma}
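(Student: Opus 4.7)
My plan is to put each Seifert surface into a standard disk-with-bands form and then simplify a generic planar projection of the whole configuration, in the spirit of the classical proof that every link in $S^3$ can be unknotted by crossing changes.

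First, I would present each $F_i$ as a disk $D_i$ with $2g_i$ bands $B_{i,1},\ldots,B_{i,2g_i}$ attached to $\partial D_i$; this is the standard band presentation of a compact orientable surface of genus $g_i$ with one boundary component. I would then fix a regular projection of $\Gamma\cup(\bigcup_i F_i)$ to a plane, isotoped so that each disk $D_i$ and a small neighborhood of $x\cup\gamma_1\cup\cdots\cup\gamma_g$ near the central vertex project without self-crossings. The nontrivial crossings then lie among the bands $B_{i,k}$ and the outer portions of the arcs $\gamma_j$.

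Next, I would assign each band (and each arc) a distinct ``height'' in $\mathbb{R}$. At every band--band (or band--arc) crossing I would apply a band-crossing change, if necessary, so that the strand of higher height passes over the strand of lower height. Once all crossings respect this height ordering, the cores of the bands and the arcs split into disjoint horizontal slabs, giving a trivial tangle. After this leveling step each band may still carry a nontrivial number of full twists relative to the product framing, which I would remove with full-twists. The resulting configuration consists of flat disjoint disks $D_i$ with unknotted, untwisted, unlinked bands attached and with trivially embedded arcs $\gamma_j$ joining the loops $l_j$ to $x$. This is the standard embedding of the $g$-handcuff spine together with its standard Seifert surface system, so $\Gamma$ is unknotted.

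I expect the main obstacle to be the leveling step: strictly speaking, the classical crossing-change leveling argument is stated for links in $S^3$, whereas here the bands are rigidly attached to the fixed disks $D_i$ and the arcs $\gamma_j$ are anchored at $x$. To adapt the argument I would regard the cores of the bands together with the arcs $\gamma_j$ as a framed graph in $S^3$, apply the link-leveling theorem to this graph while treating the disks $D_i$ and $N(x)$ as fixed anchor pieces, and then check that each required crossing change is realized by a band-crossing change on $\{F_i\}$ followed, when needed, by a full-twist correction, as guaranteed by the $1/\mathbb{Z}$-surgery interpretation given just before the statement of the lemma.
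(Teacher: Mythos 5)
Your proposal is correct and follows essentially the same route as the paper: both arguments take a regular projection and reduce $\Gamma\cup\bigcup_i F_i$ to a standard planar form, using band-crossing changes to realize the needed crossing changes (including those involving the arcs $\gamma_j$, which the paper likewise observes can be traded for band-crossing changes) and full-twists to remove the residual framing on the bands. Your leveling-by-heights account just fills in the details that the paper leaves as ``it is observed.''
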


\begin{figure}[htbp]
	\begin{center}
	\includegraphics[trim=0mm 0mm 0mm 0mm, width=.6\linewidth]{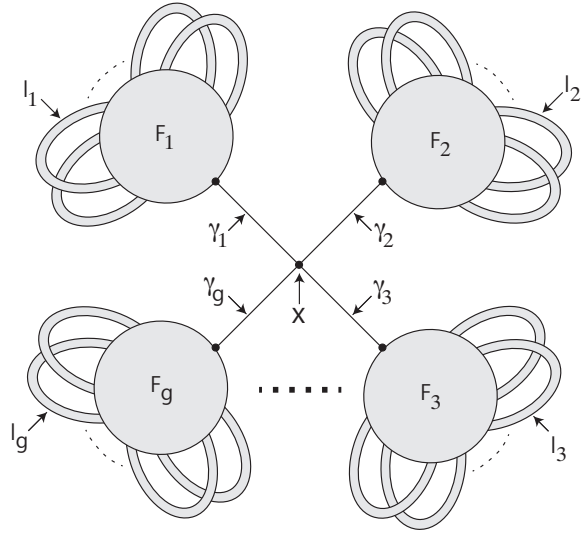}\\
	
	\end{center}
	\caption{A $g$-handcuff graph shaped spine $\Gamma$ with a Seifert surface system $\{F_i\}$, which is a ``standard planar form''}
	\label{standard}
\end{figure}

\begin{proof}
It is observed that $\Gamma$ with $\{F_i\}$ can be transformed to be a ``standard planar form'' (cf. \cite{K}) by the following operations.
\begin{enumerate}
\item a band-crossing change of $\{F_i\}$
\item a full-twists of $\{F_i\}$
\item a crossing change between $\{F_i\}$ and $\{\gamma_i\}$
\item a crossing change among $\{\gamma_i\}$
\end{enumerate}
However, the operations (3) and (4) can be exchanged for the operation (1).
If $\Gamma$ with $\{F_i\}$ has a standard planar form, then $\Gamma$ is unknotted and this completes the proof.
We remark that in a standard planar form, $l_1\cup\cdots\cup l_g$ is the trivial link.
\end{proof}

\begin{lemma}\label{core}
Let $L$ be a reflexive link in $S^3$ which is contained in a compact 3-submanifold $M$ in $S^3$.
Suppose that $L$ in null-homologous (resp. completely null-homologous) in $M$.
Then the core link $L^*$ in the 3-submanifold $M'$ obtained by a $1/\Bbb{Z}$-Dehn surgery along $L$ is also null-homologous (resp. completely
null-homologous) in $M'$.
\end{lemma}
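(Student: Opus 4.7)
The plan is to compute $[L^*]$ in $H_1(M')$ by Mayer--Vietoris and to invoke the reflexivity hypothesis at the end to invert the resulting coefficient matrix. From the decomposition $M'=E_M(L)\cup N(L^*)$ along $\partial N(L)$, the new meridian of $N(L_i^*)$ being $\mu_i+n_i\lambda_i$ gives $H_1(M')\cong H_1(E_M(L))/\langle[\mu_i+n_i\lambda_i]\rangle_i$. Because $\lambda_i$ meets $\mu_i+n_i\lambda_i$ once, it is a longitude of the new solid torus and hence isotopic to the core, so $[L_i^*]_{M'}$ is represented by $[\lambda_i]$ in this quotient, while $[\mu_j]$ is represented by $-n_j[L_j^*]$.

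For the completely null-homologous case I would take a Seifert surface $\Sigma_i\subset M$ of each $L_i$. Its algebraic intersection $\Sigma_i\cdot L_j=\mathrm{lk}(L_i,L_j)$ is determined by the ambient link in $S^3$, so $\partial(\Sigma_i\cap E_M(L))$ consists of $\lambda_i$ on $\partial N(L_i)$ together with $\mathrm{lk}(L_i,L_j)$ copies of $\mu_j$ on $\partial N(L_j)$ for $j\ne i$ (with appropriate signs). I would then extend through $N(L^*)$: an annulus in $N(L_i^*)$ cobounds $\lambda_i$ with $L_i^*$, and a 2-chain in $N(L_j^*)$ cobounds $\mu_j$ with $-n_jL_j^*$ since $[\mu_j]=-n_j[L_j^*]$ in the new solid torus. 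The assembled 2-chain $\Sigma_i^*\subset M'$ satisfies $\partial\Sigma_i^*=L_i^*+\sum_{j\ne i}n_j\mathrm{lk}(L_i,L_j)L_j^*$, yielding the system $Q[L^*]=0$ in $H_1(M')$ with $Q_{ij}=\delta_{ij}+n_j\mathrm{lk}(L_i,L_j)$. By symmetry of linking numbers $Q=P^T$, where $P_{ij}=\delta_{ij}+n_i\mathrm{lk}(L_i,L_j)$ is the surgery presentation matrix for $S^3$; reflexivity of $L$ in $S^3$ forces $\det P=\pm1$, so $Q$ is invertible over $\Bbb{Z}$ and $[L_j^*]=0$ for every $j$.

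For the merely null-homologous case I would apply the analogous construction to a single surface $\Sigma\subset M$ with $\partial\Sigma=L$ (oriented so that $L$ is null-homologous). Here the interior of $\Sigma$ is disjoint from $L$, so $\partial(\Sigma\cap E_M(L))$ consists only of longitude push-offs $\lambda_i^{\Sigma}$; their framings relative to the preferred $\lambda_i$ are dictated by the global linking matrix. Extending through $N(L^*)$ as above produces a 2-chain in $M'$ whose boundary is an integer combination of the $L_i^*$, giving a scalar relation in $H_1(M')$ that I would rearrange, again using $\det P=\pm1$, to the desired conclusion $[L^*]=0$.

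The main technical obstacle I expect is the sign and orientation bookkeeping needed to identify the coefficient matrix with $P^T$ on the nose, so that reflexivity gives integer invertibility directly. The non-completely null-homologous case is especially delicate because the framing of each $L_i$ within the single surface $\Sigma$ is shifted from the preferred framing by a total linking contribution, and absorbing that shift into the matrix identity is the step where I would have to be most careful.
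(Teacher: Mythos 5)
Your treatment of the completely null-homologous case is correct, and it takes a genuinely different (and in one respect more careful) route than the paper. The paper argues geometrically: it restricts the Seifert surfaces to $E(L)$ and extends them across the new solid tori, using that the new meridian $\mu_i+n_i\lambda_i$ meets the boundary of $F_i\cap E(L)$ in one point; this is exactly your observation that the $0$-framed longitude $\lambda_i$ is a longitude of the surgered torus and hence isotopic to the core. The paper's one-line extension tacitly assumes each $F_i$ can be chosen disjoint from the other components $L_j$, which the hypothesis $[L_i]=0$ in $H_1(M)$ does not by itself guarantee; your meridional boundary terms $\mathrm{lk}(L_i,L_j)\mu_j$ and the resulting system $Q[L^*]=0$ with $Q=P^T$ unimodular handle precisely this point, at the cost of invoking reflexivity, which the paper's argument does not use for this half.

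The merely null-homologous case is where your proposal genuinely breaks, at exactly the step you flagged. A single surface $\Sigma$ with $\partial\Sigma=L$ produces a single relation $\sum_i(1-a_in_i)[L_i^*]=0$, where $a_i=-\sum_{j\ne i}\mathrm{lk}(L_i,L_j)$, and there is no rearrangement of one equation in $n$ unknowns into $\sum_i[L_i^*]=0$; unimodularity of $P$ does not help because you no longer have $n$ independent relations. Indeed no argument can close this gap, because the assertion fails there. Take $L=L_1\cup L_2$ the Hopf link with slopes $1/1$ and $1/2$ (reflexive: a Rolfsen twist on $L_2$ leaves $-1$-surgery on an unknot), and $M=E(K)$ for a knot $K$ with $\mathrm{lk}(K,L_1)=1$ and $\mathrm{lk}(K,L_2)=-1$, so that $[L]=0$ but $[L_1]\neq 0$ in $H_1(M)=\Bbb{Z}\langle\mu_K\rangle$. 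Then $H_1(M')$ is the quotient of $\Bbb{Z}\langle\mu_K,\mu_1,\mu_2\rangle$ by $\mu_1+\mu_2+\mu_K$ and $2\mu_1+\mu_2-2\mu_K$, which is $\Bbb{Z}\langle\mu_K\rangle$ with $\mu_1=3\mu_K$ and $\mu_2=-4\mu_K$; hence $[L_1^*]=[\lambda_1]=\mu_2+\mu_K=-3\mu_K$, $[L_2^*]=[\lambda_2]=\mu_1-\mu_K=2\mu_K$, and $[L^*]=-\mu_K\neq 0$, while your relation $2[L_1^*]+3[L_2^*]=0$ does hold. (The same example shows that the key sentence of the paper's own proof --- that the new meridian meets each component of $\partial F^*$ in one point --- is valid only when the surface framing is the $0$-framing, i.e.\ in the completely null-homologous situation.) So you should not expect to push the surface argument through in the non-complete case; that half of the statement requires either the ``completely'' hypothesis or a reformulation.
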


\begin{proof}
Suppose that $L$ in null-homologous (resp. completely null-homologous) in $M$.
Then $L$ bounds a Seifert surface $F$ (resp. completely disjoint Seifert surface) in $M$.
Put $F^*=F\cap E(L)$.
By a $1/\Bbb{Z}$-Dehn surgery, the meridian of the core link $L^*$ intersects each component of $\partial F^*$ in one point.
This shows that $F^*$ can be extended to a a Seifert surface (resp. completely disjoint Seifert surface) for $L^*$ in $M'$.
Thus $L^*$ is also null-homologous (resp. completely
null-homologous) in $M'$.
\end{proof}

\begin{lemma}\label{handlebody}
Let $V$ be a handlebody in $S^3$.
Then the followings hold.
\begin{enumerate}
\item $\partial V$ admits a Seifert surface system if and only if there exists a null-homologous link $L$ in $E(V)$, which is reflexive in $S^3$, such that a handlebody can be obtained from $E(V)$ by a $1/\Bbb{Z}$-Dehn surgery along $L$.
\item $\partial V$ admits a completely disjoint Seifert surface system if and only if there exists a completely null-homologous link $L$ in $E(V)$, which is reflexive in $S^3$, such that a handlebody can be obtained from $E(V)$ by a $1/\Bbb{Z}$-Dehn surgery along $L$.
\end{enumerate}
\end{lemma}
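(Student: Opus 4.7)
The plan is to reduce each direction of both parts to the handcuff spine picture and to apply Lemmas \ref{handcuff_bound}, \ref{extension}, \ref{unknotted}, and \ref{core}.

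For the forward direction, I would first fix a meridian disk system $\{D_i\}$ of $V$ and a handcuff spine $\Gamma\subset V$ with respect to $\{D_i\}$, having loops $l_1,\ldots,l_g$ dual to the disks. I then construct a Seifert surface system $\{F_i\}$ for $\Gamma$: in case (1) I use the system provided by Lemma \ref{handcuff_bound}, while in case (2) I convert the hypothesized completely disjoint Seifert surface system $(\{D_i\},\{F_i'\})$ for $\partial V$ (with its $V$-side replaced by meridian disks via Remark \ref{Lei}) into a completely disjoint Seifert surface system for $\Gamma$ by attaching $g$ pairwise disjoint annuli inside $V$ joining each $\partial F_i'$ to $l_i$. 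Lemma \ref{unknotted} then unknots $\Gamma$ via band-crossing changes and full-twists of $\{F_i\}$, realized as $1/\Bbb{Z}$-Dehn surgery on a trivial link $L$ disjoint from $\bigcup F_i$; after isotoping $\{F_i\}$ to lie in $E(V)$ outside a collar of $\bigcup l_i$, I can take $L\subset E(V)$. Reflexivity is automatic, and null-homology (completely null-homology in case (2)) of $L$ in $E(V)$ follows from the vanishing intersections $L\cdot F_j=0$ and hence vanishing linking numbers of $L$ with each $l_j$. After the surgery, $V=N(\Gamma)$ becomes an unknotted handlebody, so $E(V)$ is a handlebody.

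For the backward direction, the surgery turns $E(V)$ into a handlebody $V'$ with $S^3=V\cup_{\partial V}V'$. In case (1), applying Lemma \ref{extension} to $V'$ with any meridian disk system $\{D_i'\}$ directly produces a spanning surface system $\{F_i\}$ for $V=E(V')$ such that $(\{D_i'\},\{F_i\})$ is a Seifert surface system for $\partial V$. In case (2) I would additionally invoke Lemma \ref{core}: the core link $L^*\subset V'$ is completely null-homologous in $V'$, so it bounds a completely disjoint Seifert surface system there. Using this I would choose a handcuff spine $\Gamma'$ of $V'$ whose loops incorporate the components of $L^*$ (supplemented by trivial loops if necessary), so that the Seifert surface system for $\Gamma'$ produced in the proof of Lemma \ref{handcuff_bound} can be taken to be completely disjoint; its restriction to $V$ then gives the desired completely disjoint spanning surface system.

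The hard part will be the backward direction of case (2): bridging the gap between Lemma \ref{core}'s completely disjoint Seifert surfaces of $L^*$ in $V'$ and a handcuff spine of $V'$ whose loops admit pairwise disjoint Seifert surfaces in $S^3$. This is the step where the completely null-homologous hypothesis on $L$ is genuinely used, and may alternatively be approached via Waldhausen's theorem on the standardness of Heegaard splittings of $S^3$. A secondary technical point is arranging, in the forward direction, the trivial surgery link to sit inside $E(V)$ rather than merely in $S^3\setminus\bigcup F_i$, which should follow by a careful isotopy of the surfaces $\{F_i\}$.
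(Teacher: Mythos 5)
Your forward direction is essentially the paper's: build a $g$-handcuff spine, extend the given surfaces to a Seifert surface system for the spine, realize the unknotting moves of Lemma \ref{unknotted} by $1/\Bbb{Z}$-Dehn surgery on a trivial link disjoint from the surfaces, and deduce (complete) null-homology in $E(V)$ from the vanishing of the intersection numbers with the spanning surfaces. That half is sound.

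The gap is in the backward direction. In case (1) you take a meridian disk system $\{D_i'\}$ of the surgered handlebody $V'$ and declare $(\{D_i'\},\{F_i\})$ to be a Seifert surface system for $\partial V$; but such a system must consist of spanning surface systems for $V$ and for the \emph{original} $E(V)$, and the disks $D_i'$ live in $V'$, not in $E(V)$: they may run through the surgery solid tori $N(L^*)$, which are reglued when the surgery is undone, so $D_i'$ does not persist as a surface in $E(V)$. Tellingly, your case (1) argument never uses the null-homology of $L$, which is the only nontrivial hypothesis. The paper's resolution is the step you omitted: by Lemma \ref{core} the core link $L^*$ is (completely) null-homologous in $V'$, so (each component of) $L^*$ has zero algebraic intersection with each $D_i'$, and one tubes $\{D_i'\}$ along $L^*$ to obtain surfaces with the same boundary lying in $V'-N(L^*)=E(V)-N(L)\subset E(V)$; complete null-homology is what keeps the $g$ tubed surfaces disjoint in case (2). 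Your proposed substitute for case (2) --- a handcuff spine of $V'$ ``whose loops incorporate the components of $L^*$'' --- does not work as stated, since the loops of a handcuff spine must be cores of the handles, dual to a meridian disk system, and an arbitrary completely null-homologous link in $V'$ need not be isotopic to such cores; nor is Waldhausen's theorem needed. (For case (1) alone, note the conclusion is in any event immediate from Lemma \ref{extension} applied to $V$ itself in the original $S^3$.) Replacing your backward arguments by the tubing construction along $L^*$ closes the proof and recovers the paper's route.
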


%\begin{remark}\label{handlebody_Seifert}
%For any genus $g$ handlebody $V$, $\partial V$ admits a Seifert surface system $(\{F_i\},\{F_i'\})$, where $\{F_i\}$ is a meridian disk system for $V$.
%Indeed, if we take a $g$-handcuff shaped spine $\Gamma$ of $V$, then loops of $\Gamma$ bounds a Seifert surface by the Seifert's algorithm (\cite{S}).
%Thus $\partial V$ admits a Seifert surface system consisting of a meridian disk system corresponding to meridians of loops of $\Gamma$ and the Seifert surface for loops of $\Gamma$.
%\end{remark}

\begin{proof}
(1) Suppose that there exists a null-homologous reflexive link $L$ in $E(V)$ such that a handlebody, say $W$, can be obtained from $E(V)$ by a $1/\Bbb{Z}$-Dehn surgery along $L$.
There exists a meridian disk system $\{D_i\}$ for $W$.
Since $L$ is null-homologous in $E(V)$, by Lemma \ref{core}, the core link $L^*$ is also null-homologous in $W$.
Therefore, we can obtain a Seifert surface system for $E(V)$ by tubing $\{D_i\}$ along $L^*$.

Conversely, suppose that $\partial V$ admits a Seifert surface system $(\{F_i\}, \{F_i'\})$, where $\{F_i\}$ and $\{F_i'\}$ are spanning surface systems for $V$ and $E(V)$.
By Remark \ref{Lei}, we may assume that each $F_i$ is a disk.
Then there exists a $g$-handcuff graph shaped spine $\Gamma$ and $F_i'$ can be extended to a Seifert surface system for $\Gamma$.
By Lemma \ref{unknotted}, $\Gamma$ with $\{F_i'\}$ can be unknotted, hence $E(V)$ is a handlebody, by band-crossing changes and full-twists of $\{F_i'\}$.
This operations can be obtained by a $1/\Bbb{Z}$-Dehn surgery along a trivial link $L$ in $\Gamma\cup \bigcup_i F_i'$.
Since $L$ is contained in $E(V)-\bigcup_i F_i'$, $L$ is a null-homologous link in $E(V)$.
Hence we obtain a null-homologous reflexive link $L$ in $E(V)$ such that a handlebody can be obtained from $E(V)$ by a $1/\Bbb{Z}$-Dehn surgery along $L$.

(2) This can be proved by the argument similar to (1).
\end{proof}

\begin{lemma}\label{heegaard}
Let $S$ be a Heegaard surface in $S^3$ which decomposes $S^3$ into two handlebodies $V$ and $V'$.
Let $\{D_i\}$ be a meridian disk system for $V$.
Then there exist a null-homologous reflexive link $L'$ in $V'$ which yields a handlebody $V''$ by a $1/\Bbb{Z}$-Dehn surgery on $L'$ and a meridian disk system $\{D_i''\}$ for $V''$ such that $(\{D_i\},\{D_i''\})$ is a completely disjoint Seifert surface system for $S$ in $V\cup V''$.
%Then there exists a null-homologous link $L'$ in $V'$ which is reflexive in $S^3$, and a meridian disk system $\{D_i''\}$ for a handlebody $V''$ obtained from $V'$ by a $1/\Bbb{Z}$-Dehn surgery along $L'$ such that $(\{D_i\},\{D_i''\})$ is a completely disjoint Seifert surface system for $S$.
\end{lemma}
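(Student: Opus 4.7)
The plan is to combine Lemma \ref{extension}, Lemma \ref{unknotted}, and the $1/\Bbb{Z}$-surgery realization used in the proof of Lemma \ref{handlebody}(1), but starting from a Seifert surface system whose $V$-side already consists of the given $\{D_i\}$, so that only the $V'$-side needs to be modified.

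First I would apply Lemma \ref{extension} to $V$ with its meridian disk system $\{D_i\}$ to produce a spanning surface system $\{F_i'\}$ for $V' = E(V)$ making $(\{D_i\}, \{F_i'\})$ a Seifert surface system for $S$; in particular $|\partial D_i \cap \partial F_j'| = \delta_{ij}$. Next I would choose a $g$-handcuff graph shaped spine $\Gamma$ of $V$ with respect to $\{D_i\}$ and extend $\{F_i'\}$ across the product region $V - N(\Gamma) \cong \partial V \times I$ by a tube from each $\partial F_i'$ to a longitude of $l_i$, yielding a Seifert surface system $\{F_i''\}$ for $\Gamma$ with $F_i'' \cap V' = F_i'$.

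Then, exactly as in the second half of the proof of Lemma \ref{handlebody}(1), I would apply Lemma \ref{unknotted} to bring $(\Gamma, \{F_i''\})$ into standard planar form by band-crossing changes and full-twists of $\{F_i''\}$. These operations are realized by a $1/\Bbb{Z}$-Dehn surgery along a trivial (hence reflexive) link $L' \subset V' - \bigcup F_i'$, which is null-homologous in $V'$ since it is disjoint from the Seifert surfaces $\{F_i'\}$. After the surgery, $S$ is unchanged, $V'$ becomes a handlebody $V''$ with $\partial V'' = S$, and in the standard planar form the surfaces $\{F_i''\}$ are pairwise disjoint disks, so $\{F_i'\}$ becomes a completely disjoint spanning surface system for $V''$. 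By Remark \ref{Lei}, there is then a meridian disk system $\{D_i''\}$ for $V''$ with $\partial D_i'' = \partial F_i'$. Because $S$ is untouched by the surgery, the dual intersection pattern $|\partial D_i \cap \partial D_j''| = |\partial D_i \cap \partial F_j'| = \delta_{ij}$ is preserved, so $(\{D_i\}, \{D_i''\})$ is a completely disjoint Seifert surface system for $S$ in $V \cup V''$.

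The main technical point requiring care is verifying that ``standard planar form'' in Lemma \ref{unknotted} really forces the $F_i''$ to become pairwise disjoint disks, not merely arbitrary Seifert surfaces for unknotted loops; this disjointness is what licenses both the description of $\{F_i'\}$ as a completely disjoint spanning surface system for $V''$ and the subsequent appeal to Remark \ref{Lei}. Once this is granted, the argument reduces to a chain of previously established lemmas with no substantial further obstacle.
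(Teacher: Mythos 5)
Your overall strategy -- unknot the handcuff spine $\Gamma$ of $V$ by $1/\Bbb{Z}$-surgeries supported in $V'$, certify null-homology of the surgery link via its disjointness from a spanning surface system for $V'$, and read off the dual meridian disks of the resulting complement -- is sound and close in spirit to the paper's proof, which simply unknots $\Gamma$ by crossing changes realized by surgery in $V'$. Your detour through Lemma \ref{extension} and Lemma \ref{unknotted} is in fact a more careful way to justify that $L'$ is null-homologous in $V'$ (a loop disjoint from all $F_i'$ has zero intersection number with each $[F_i']\in H_2(V',\partial V')$, and since $\partial D_j\cdot\partial F_i'=\delta_{ij}$ these classes detect all of $H_1(V')$), a point the paper passes over quickly.

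However, the step you yourself flagged is a genuine gap, and it cannot be granted: in general the surfaces $\{F_i''\}$ do \emph{not} become pairwise disjoint disks in standard planar form. Band-crossing changes and full-twists only alter the embedding of the surfaces, never their homeomorphism type or their number of components, so whatever Seifert's algorithm hands you in Lemma \ref{handcuff_bound} is what you keep forever. When the loops $l_i$ of $\Gamma$ have nonzero pairwise linking numbers (e.g.\ $V$ a neighborhood of a Hopf link with a connecting arc), a Seifert surface system for $\Gamma$ necessarily contains a surface with more than one boundary component -- $g$ disjoint surfaces with $\partial F_i=l_i$ and interiors disjoint from $\Gamma$ would force $\mathrm{lk}(l_i,l_j)=l_i\cdot F_j=0$ -- so $\{F_i'\}$ is never completely disjoint in such cases and Remark \ref{Lei} simply does not apply to it. Consequently your derivation of $\{D_i''\}$ with $\partial D_i''=\partial F_i'$ collapses. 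The fix is to abandon the surfaces at the last step: once $\Gamma$ is in standard planar form, $N(\Gamma)$ is a standardly embedded handlebody in the new $S^3$, so its complement $V''$ is a handlebody carrying an evident meridian disk system $\{D_i''\}$ whose boundaries are the longitudes dual to $\partial D_i$, giving $|\partial D_i\cap\partial D_j''|=\delta_{ij}$ directly. This is exactly how the paper concludes; the Seifert surfaces are needed only to certify that the surgery link is null-homologous, not to produce the disks $D_i''$.
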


\begin{proof}[Proof of Lemma \ref{heegaard}]
We take a $g$-handcuff graph shaped spine $\Gamma$ of $V$ with respect to $D_i$.
Since $\Gamma$ can be unknotted by crossing changes, there exists a null-homologous reflexive link $L'$ in $V'$ such that after a $1/\Bbb{Z}$-Dehn surgery along $L'$, every loops of $\Gamma$ bound mutually disjoint disks.
Therefore, a handlebody $V''$ obtained from $V'$ by a $1/\Bbb{Z}$-Dehn surgery along $L'$ admits a meridian disk system $\{D_i''\}$ so that $(\{D_i\},\{D_i''\})$ is a completely disjoint Seifert surface system for $S$.
\end{proof}

\begin{proof}[Proof of Theorem \ref{main}]
(1) We prove by an induction on the genus $g(\partial M)$.
Since the 3-sphere does not contain an incompressible closed surface, there exists a compressing disk $D$ for $\partial M$ in $S^3$.
We divide the proof into two cases.

\begin{description}
\item[Case 1] $D\subset M$
\item[Case 2] $D\subset E(M)$
\end{description}

In Case 1, put $M'=M-{int}\,N(D)$.
By the assumption of the induction, there exists a null-homologous reflexive link $L'$ in $M'$ such that handlebodies can be obtained from $M'$ by a $1/\Bbb{Z}$-Dehn surgery along $L'$.
This proves the theorem since $M$ is obtained by adding 1-handle $N(D)$ to $M'$.

In Case 2, we take a maximal compression body $W$ for $\partial M$ in $E(M)$ \cite{B}.
If $W$ is a handlebody (i.e. $W=E(M)$), then the theorem follows Lemma \ref{extension} and Lemma \ref{handlebody} (1).
Otherwise as $g(\partial W)<g(\partial M)$, by the assumption of the induction, there exists a null-homologous reflexive link $L'$ in each component of $E(M)-{int}\,W$ such that handlebodies can be obtained from the component by a $1/\Bbb{Z}$-Dehn surgery along $L'$.
After these $1/\Bbb{Z}$-Dehn surgery, $E(M)$ is a handlebody.
Therefore, again by Lemma \ref{extension} and Lemma \ref{handlebody} (1), there exists a null-homologous reflexive link $L$ in $M$ such that a handlebody can be obtained from $M$ by a $1/\Bbb{Z}$-Dehn surgery along $L$.
Finally, we recover the previous $1/\Bbb{Z}$-Dehn surgery on each component of $E(M)-{int}\,W$ to obtain the original $E(M)$.

(2) Suppose that there exists a completely null-homologous reflexive link $L$ in $M$ such that a handlebody can be obtained from $M$ by a $1/\Bbb{Z}$-Dehn surgery along $L$.
There exists a meridian disk system $\{D_i\}$ for the resultant handlebody.
Since $L$ is completely null-homologous in $M$, we can obtain a completely disjoint spanning surface system for $M$ by tubing $\{D_i\}$ along $L$.

Conversely, suppose that $M$ admits a completely disjoint spanning surface system $\{F_i\}$.
In the following 3 steps, we convert $M$ and $E(M)$ into two handlebodies $V$ and $V''$ so that $(V,V'')$ admits a meridian disk system $(\{D_i\},\{D_i''\})$ with $\partial F_i=\partial D_i$.

Step 1: By (1) of this theorem, there exists a null-homologous link reflexive $L$ in $E(M)$ such that a handlebody can be obtained from $E(M)$ by a $1/\Bbb{Z}$-Dehn surgery along $L$.
Let $V'$ the resultant handlebody obtained from $E(M)$ and note that $M\cup V'$ is again the 3-sphere.

Step 2: We note that there exists a degree one map from $M$ to a handlebody $V$ which sends each $F_i$ to a meridian disk $D_i$ of $V$ and preserves the boundary of $M$ (cf. {\cite[Theorem 2]{Lam}}, {\cite[Theorem 5]{Hem}}).
We naturally extend this degree one map to a degree one map $\phi:S^3=M\cup V' \to X=V\cup V'$ as follows.
\begin{enumerate}
\item $V'$ is contained in $X$ by an inclusion.
%$\phi|_{\partial M}$ is the identity.
\item Each $F_i$ is sent to a meridian disk $D_i$ of the handlebody $\phi(M)=V$.
\item The remnant $M-\bigcup int\,N(F_i)$ is sent to the 3-ball $V-\bigcup int\,N(D_i)$.
\end{enumerate}
Since $\phi_*:\pi_1(S^3)\to \pi_1(X)$ is surjective \cite[Lemma 15.12]{H}, $X$ is homeomorphic to $S^3$ \cite{P1,P2,MT}.

Step 3: By Lemma \ref{heegaard}, there exists a null-homologous reflexive link $L'$ in $V'$ and a meridian disk system $\{D_i''\}$ for a handlebody $V''$ obtained from $V'$ by a $1/\Bbb{Z}$-Dehn surgery along $L'$ such that $(\{D_i\}, \{D_i''\})$ is a completely disjoint Seifert surface system for $(V, V'')$.

%Now we go back step 2 by $\phi^{-1}$.
%Let $\{F_i\}$ denote $\phi^{-1}(\{D_i'\})$.
%Then $\{F_i\}$ is a spanning surface system of $M$.
%Hence there exists a completely disjoint Seifert surface system $(\{F_i\}, \{D_i'\})$ for $(M, V')$.

Since the degree one map $\phi$ is a boundary preserving map by the condition (1), $(\{F_i\}, \{D_i''\})$ is a completely disjoint Seifert surface system for $(M, V'')$.
By Lemma \ref{handlebody} (2), there exists a completely null-homologous reflexive link $L_0$ in $M$ such that a handlebody can be obtained from $M$ by a $1/\Bbb{Z}$-Dehn surgery along $L_0$.
Moreover, by the proof of Lemma \ref{handlebody} (2), we can take $L_0$ so that $L_0\cap \bigcup F_i=\emptyset$.
Thus the completely disjoint spanning surface system $\{F_i\}$ is contained in the resultant handlebody $V_0$ obtained from $M$ by a $1/\Bbb{Z}$-Dehn surgery along $L_0$.
\end{proof}

\bigskip

%\noindent{\bf Acknowledgements.}

\bibliographystyle{amsplain}

\end{document}